\let\I\undefined
\def\Isymb{{\mathbb 1}}
\newcommand{\I}[1][]{\Isymb
        \@ifempty{#1}{\isparamm\subscr\subnozjel}{\subnozjel{#1}}} 
\def\subscr#1{_{(#1)}}
\def\subnozjel#1{_{#1}}
\newcommand{\cond}[1][]{\,#1|\,}
\def\cbrstar{\cbr*}
\def\isparam#1{\isparamm#1\relax}
\def\isparamm{\@ifnextchar{\bgroup}}
\newcounter{enumn}\newcounter{enumr}
\DeclarePairedDelimiter\abs{\lvert}{\rvert}
\DeclarePairedDelimiter\cbr{\{}{\}}
\DeclarePairedDelimiter\norm{\|}{\|}
\DeclarePairedDelimiter\br{\lbrack}{\rbrack}
\DeclarePairedDelimiter\zjel{(}{)}
\DeclarePairedDelimiter\q{\langle}{\rangle}
\def\PEzjel{\zjel*}
\def\PEbr#1{\br*}
\def\PEfont{\mathbf}
\newcommand{\PE}[1][]{\PEfont{\Pe}_{#1}%
  \@ifnextchar^{\xPE}{\isparam\PEzjel}}
\def\xPE^#1{^{#1}\isparam\PEzjel}
\renewcommand{\E}{\def\Pe{E}\PE}
\renewcommand{\P}{\def\Pe{P}\PE}
\def\F{{\mathcal F}}
\let\smallset=\cbr
\def\set@internal#1#2#3{#1{#2\,:\,#3}}
\def\set{\@ifstar{\set@internal{\cbr*}}{\set@internal\cbr}}
\let\sign=\sgn
\def\real{{\mathbb R}}
\def\@bs{\relax}
\def\stripbs#1#2\relax#3{%
  \def\next{#1}%
  \ifx\@bs\next\relax\expandafter\@firstoftwo\else\expandafter\@secondoftwo\fi
  {\def#3{#2}}{\def#3{#1#2}}%
}
\def\setbs#1#2\relax{\def\@bs{#1}}
\def\defname#1#2#3#4{%
  \expandafter\stripbs\string#1\relax\nn
  \edef\nn{\csname#3\nn#4\endcsname}%
  \expandafter\def\nn#2\relax%
}
\def\deftx#1{%
  \defname{#1}{{\tilde{#1}}}{t}{}%
}
\newcommand\defxn[2]{%
  \expandafter\newcommand\csname#1n\endcsname[1][n]{#2^{(##1)}}}
\definecolor{labelkey}{named}{teal}
\def\levy/{L\'evy}
\newtheorem{theorem}{Theorem}
\newtheorem{lemma}[theorem]{Lemma}
\newtheorem{proposition}[theorem]{Proposition}
\theoremstyle{definition}
\theoremstyle{remark}
\newtheorem*{remark*}{Remark}
\let\PEfont\mathbf
\def\W{\mathbf{W}}
\def\cH{\mathcal{H}}
\def\cT{\mathcal{T}}
\def\lT{\mathbf{T}}
\beta \defxn {tb}\tbeta
\def\e{\mathcal{E}}
\begin{document}
\title*{On 
  the exactness of 
  the \levy/--transformation}
\author{Vilmos Prokaj}
\institute{
  E\"otv\"os Lor\'and University, Department of Probability Theory and
  Statistics, 1117 Budapest, P\'azm\'any P. s\'et\'any 1/C, Hungary, 
  \email{prokaj@cs.elte.hu}} 

\maketitle

\abstract{In a recent paper we gave a sufficient condition for the
  strong mixing property of the \levy/--transformation. In this note we
  show that it actually implies a much stronger property, namely 
  exactness.
}

\section{Introduction}
Our aim in this short note, to supplement the result 
of \cite{Prokaj2010a}. In that work we obtained a condition which implies
the strong mixing property, hence the ergodicity of the 
\levy/--transformation. We reformulate this condition, see
\eqref{cond:newform}  below,
 and show that
it actually implies a stronger property called
exactness. That is, we deduce that the tail $\sigma$-algebra of the
\levy/ transformation is trivial provided that condition \eqref{cond:newform}
holds.

\section{Summary of the results of \cite{Prokaj2010a}}
\label{sec:3}

First, we  fix some notations. 
$\W=C[0,\infty)$ is the space of continuous function defined on
$[0,\infty)$, $\P$ is 
the Wiener measure on the Borel $\sigma$-field of $\W$, and $\beta$ is the
canonical process on $\W$. 
Finally $T$ is a $\P$
almost everywhere defined transformation of $\W$ defined by the formula 
\begin{equation}\label{eq:T def}
  (T\beta)=\int h(s,\beta)d\beta_s
\end{equation}
where $h$ is a progressively measurable function on $[0,\infty)\times \W$
taking values in $\smallset{-1,1}$. We use the notation $\bn$ for $T^n
\beta$ and 
$(\Fn_t)_{t\geq 0}$ for the filtration generated by $\bn$ and
$\hn_s=\prod_{k=0}^{n-1} h(s,\bn[k])$. 

The transformation $T$ is called {\em exact}, whenever $\bigcap_n\Fn_\infty$
is trivial. 

\bigskip

The \levy/ transformation is
obtained by the choice $h(s,\beta)=\sign (\beta_s)$ and denoted by
$\lT$. The rest of this section is devoted to this special case.

\bigskip

The main observation of \cite{Prokaj2010a} was that the existence of
certain stopping times makes it possible to estimate the covariance of
$\hn_s$ and $\hn_1$, which is the key to prove the strong mixing
property of $\lT$. More precisely, 
for $r\in(0,1)$ and $C>0$ let 
\begin{displaymath}
 \tau_{r,C}=\inf\set*{s>r} {\exists n,\,
  \bn_s=0, \min_{0\leq k<n}\abs{\bn[k]_s}>C\sqrt{(1-s)_+}}.
\end{displaymath}
That is $\tau_{r,C}$ is the first time after $r$ when for some $n$ the
first $n$ iterated paths are relatively far away from the origin while
$\bn$ is zero. 

Then it was proved that
\begin{equation}\label{eq:cov hn}
  \limsup_{n\to\infty}\abs*{\E{\hn_r\hn_1}}\leq \P{\tau_{r,C}=1}+\P{
  \sup_{0\leq s\leq 1}\abs{\beta_s}>C}.
\end{equation}
It was stated without the first term on the right, under the
assumption that this term is zero. The proof of this inequality used the
coupling of the shadow path $\tbeta$, reflected after $\tau_{r,C} $ and the
original path $\beta$.  This argument actually yields the following
form of \eqref{eq:cov hn}
\begin{equation}
  \label{cond:newform}
  \lim_{n\to\infty}\abs*{\E{\hn_1\cond[\big]\Fn_1\vee\Fn[0]_r}}\leq 
  \P{\tau_{r,C}=1}+\P{\sup_{0\leq s\leq 1}\abs{\beta_s}>C}.
\end{equation}
Note that the limit on the left hand side exists as
$\abs*{\E{\hn_1\cond[\big]\Fn_1\vee\Fn[0]_r}}$
is a reversed submartingale.

By virtue of the estimates in \eqref{eq:cov hn} and
\eqref{cond:newform} a sufficient condition for the strong mixing of
the \levy/ transformation is that
\begin{equation}\label{eq:tau}
  \tau_{r,C}<1,\quad \text{almost surely, for all $r\in(0,1)$,
    $C>0$.} 
\end{equation}

The main result of this paper is the following theorem.
\begin{theorem}\label{thm:main}
  If \eqref{eq:tau} holds then the \levy/ transformation is exact.
\end{theorem}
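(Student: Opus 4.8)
The plan is to prove the equivalent statement that $\E{X\cond\Fn_\infty}\to\E{X}$ in $L^1$ for every bounded $\Fn[0]_\infty$-measurable $X$; by the reversed martingale convergence theorem this is equivalent to triviality of the tail $\bigcap_n\Fn_\infty$, the $\sigma$-fields $\Fn_\infty$ being decreasing in $n$. Since $\E{\,\cdot\,\cond\Fn_\infty}$ is an $L^1$-contraction and $\bigcup_{r<\infty}L^\infty(\Fn[0]_r)$ is dense in $L^1(\Fn[0]_\infty)$, it is enough to treat bounded $\Fn[0]_r$-measurable $X$; and because Brownian scaling commutes with $\lT$, fixes every $\Fn_\infty$, and keeps \eqref{eq:tau} in force, one may take $r=\rho<1$. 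In other words, the goal is that $\Fn[0]_\rho$ be independent of $\bigcap_n\Fn_\infty$ for every $\rho<1$ --- whereupon, letting $\rho$ and the scaling vary, the tail becomes independent of $\Fn[0]_\infty$, hence of itself, hence trivial.

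Two preliminaries are needed. First, under \eqref{eq:tau} the term $\P{\tau_{r,C}=1}$ in \eqref{cond:newform} vanishes, so letting $C\to\infty$ removes the remaining one and gives $\E{\hn_1\cond\Fn_1\vee\Fn[0]_r}\to0$ in $L^1$ for every $r\in(0,1)$, and, by scaling, the same with the time $1$ replaced by any later time. Second, a structural description of the randomness that one application of $\lT$ destroys: by Tanaka's formula and Skorokhod's lemma $\abs{\bn[m]_t}=\bn[m+1]_t-\min_{s\le t}\bn[m+1]_s$, so $\abs{\bn[m]}$ is a measurable functional of $\bn[m+1]$, while $\bn[m]$ is recovered from $\abs{\bn[m]}$ by attaching its excursion signs, which conditionally on $\abs{\bn[m]}$ are i.i.d.\ Rademacher and independent of it. Iterating, $\beta$ is a fixed measurable function of $\bn$ and of mutually independent sign families $\eps^{(0)},\dots,\eps^{(n-1)}$ that are jointly independent of $\bn$; in particular $\Fn[0]_\infty=\Fn_\infty\vee\sigma(\eps^{(0)},\dots,\eps^{(n-1)})$ with the two $\sigma$-fields independent. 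The point is that reflecting a single iterate after one of its zeros is exactly the operation of flipping the excursion signs of that iterate past that zero: it leaves every later iterate --- hence $\bn$ once $n$ exceeds the (random but finite) index of the reflected iterate --- untouched, and through the reconstruction it is manifestly $\P$-preserving. Read this way, the coupling of \cite{Prokaj2010a} behind \eqref{cond:newform} already preserves $\Fn[0]_r$ and, for $n$ past that index, $\Fn_\infty$, so the estimate upgrades to $\E{\hn_1\cond\Fn_\infty\vee\Fn[0]_r}\to0$.

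The core step fixes bounded $X$ that is $\Fn[0]_\rho$-measurable, $\rho<1$, and bounded $Y$ that is $\bigcap_n\Fn_\infty$-measurable, and targets $\E{XY}=\E{X}\,\E{Y}$. By the scaled form of \eqref{eq:tau}, in every subinterval of $(0,\rho)$ and for every threshold $C$ there is almost surely a zero of some iterate at which all earlier iterates are large in the sense of \eqref{eq:tau} rescaled to that subinterval; the associated reflection $\theta$ --- a flip of excursion signs --- is $\P$-preserving, alters $\beta$ only after that zero, and fixes $\bn$ above the reflected index, so $Y\circ\theta=Y$ and therefore $\E{(X\circ\theta)Y}=\E{XY}$. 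Performing such reflections on many consecutive subintervals of $(0,\rho)$ and estimating their joint effect by the upgraded bound above applied on each subinterval --- each error term being of the shape $\P{\sup_{s\le1}\abs{\beta_s}>C}$ once that subinterval is rescaled to unit length --- one should obtain that a suitable average of the reflected copies of $X$ converges in $L^2$ to $\E{X}$; pairing with $Y$ then yields $\E{XY}=\E{X}\,\E{Y}$, and the first paragraph finishes the proof.

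The hard part is precisely this last step: converting the single conditional bound $\E{\hn_1\cond\Fn_\infty\vee\Fn[0]_r}\to0$ into the assertion that enough reflections average $X$ down to its mean. One has to run the reflections at many time scales simultaneously while keeping each a genuine measure-preserving involution --- the structural lemma makes the measurability of the random flip index and time transparent --- and, above all, arrange the bookkeeping of the one-per-reflection error terms so that, letting first the number of reflections and then $C$ tend to infinity, they vanish; this is exactly the point at which \eqref{eq:tau} is used in full force, for all $r$, all $C$, and, after rescaling, all target times. By comparison the reductions of the first paragraph and the structural lemma are routine.
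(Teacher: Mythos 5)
You have not given a proof; you have given a program whose decisive step is explicitly left open. The reductions in your first paragraph (density of $\bigcup_r L^\infty(\Fn[0]_r)$, scaling, and the Kolmogorov-type conclusion that a tail independent of $\Fn[0]_\infty$ is trivial) are fine, and the structural description of $\Fn[0]_\infty$ as $\Fn_\infty$ together with independent excursion signs is correct. But the argument then rests on two things that are not established. First, the ``upgrade'' of \eqref{cond:newform} to $\E{\hn_1\cond[\big]\Fn_\infty\vee\Fn[0]_r}\to0$ is asserted by re-reading the coupling of \cite{Prokaj2010a}; conditioning on the whole future of $\bn$ instead of $\Fn_1$ is a strictly stronger statement, the reflection is performed at a random level and random time and does alter all lower iterates after that time, so whether the shadow-path estimate survives this strengthening requires an actual reworking of that proof, not a remark. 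Second, and more seriously, the core step --- showing that iterated reflections on many subintervals of $(0,\rho)$ average $X$ down to $\E{X}$ in $L^2$ while the tail variable $Y$ is held fixed, with the per-reflection error terms $\P{\sup_{s\le1}\abs{\beta_s}>C}$ summed and then killed by letting $C\to\infty$ --- is exactly what you label ``the hard part'' and do not carry out. Composing measure-preserving involutions at different scales, keeping track of which $\sigma$-fields each one fixes, and making the errors vanish is the whole content of such an argument; nothing in \eqref{eq:tau} or \eqref{cond:newform} makes it automatic.

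For comparison, the paper deliberately avoids any strengthening of \eqref{cond:newform}: it keeps the conditioning $\sigma$-field $\Fn[0]_{rt}\vee\Fn_t$ (Proposition \ref{prop:2}) and converts it into exactness by purely martingale/functional-analytic means --- the characterization of tail triviality through exponential functionals (Proposition \ref{prop:1}), the projection formula of Lemma \ref{lem:2}, and the criterion $\E[Q]{\hn_s\cond[\big]\Fn_s}\to0$ for every equivalent measure $Q$ (Proposition \ref{cor:3}), finishing with the Bayes rule and the approximation $Z_{rt}\to Z_t$. That machinery is precisely the substitute for the multi-scale reflection bookkeeping you postpone, so the gap in your proposal is not a technical detail but the main missing argument.
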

The proof is based on the estimate \eqref{cond:newform} and is given
in the next section where we do not assume the special form of the
\levy/ transformation. That is, we prove the next statement from which
Theorem \ref{thm:main} follows.

\goodbreak

\begin{proposition}\label{prop:2}
  Let $T$ be the transformation of the Wiener--space as in \eqref{eq:T
    def}. 
  If 
  \begin{equation}
    \lim_{n\to\infty}\E{\hn_t\cond[\big]\Fn[0]_{rt}\vee\Fn_t}=0,\quad\text{for almost
      all $t>0$ and $r\in[0,1)$}
    \label{eq:cond}
  \end{equation}
  then $T$ is exact.
\end{proposition}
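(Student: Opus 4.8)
The plan is to show that any tail-measurable event, i.e. any $A\in\bigcap_n\Fn_\infty$, is trivial by approximating its indicator and exploiting the conditioning estimate \eqref{eq:cond}. Since $A\in\Fn_\infty$ for every $n$, the random variable $\I_A$ is measurable with respect to $\sigma(\bn)$, so it can be written (up to $\P$-null sets) as a functional $F^{(n)}$ of the path $\bn$. The key structural fact I would use is the identity $\bn[k] = T^{\,k-n}\!\circ\,(\text{shift})$, or more precisely that $\bn_t = \int_0^t \hn_s\,d\bn[0]_s$ for the \levy/-type iterations — that is, $\hn$ is exactly the integrand expressing $\bn$ as a stochastic integral against the original Brownian motion $\beta=\bn[0]$. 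Consequently the process $\hn$ links the $\sigma$-algebras $\Fn_t$ and $\Fn[0]_t$, and \eqref{eq:cond} says that, asymptotically in $n$, the sign-integrand $\hn_t$ becomes conditionally orthogonal to the past $\Fn[0]_{rt}\vee\Fn_t$.

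The main steps would be as follows. First, fix a tail event $A$ and, for each $n$, represent $\I_A$ through the $n$-th iterate; by a density/monotone-class argument it suffices to test $A$ against cylinder functionals of $\bn[0]$ on a finite time horizon, say against $\event{\beta_{t_1}\in B_1,\dots,\beta_{t_m}\in B_m}$ or smooth bounded functions thereof. Second, use the integral representation to re-express increments of $\bn[0]$ via increments of $\bn$ weighted by $\hn$, thereby reducing the correlation between $\I_A$ (a function of $\bn$, hence $\Fn_\infty$-measurable) and a functional of $\beta$ to an expression involving conditional expectations of products of the $\hn_{t}$'s. Third, invoke \eqref{eq:cond}: pulling the $\Fn_t\vee\Fn[0]_{rt}$-measurable part out of the conditional expectation and letting $n\to\infty$, the $\hn_t$ factor contributes $0$ in the limit, forcing the covariance of $\I_A$ with every such cylinder functional of $\beta$ to vanish. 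Fourth, since such functionals generate $\Fn[0]_\infty=\F^{(0)}_\infty$ and $\P$ is a probability measure, $\I_A$ is uncorrelated with (hence independent of, being $\{0,1\}$-valued after the right argument) the full $\sigma$-algebra generated by $\beta$; but $A$ itself is $\sigma(\beta)$-measurable when $n=0$, so $\I_A$ is independent of itself, whence $\P{A}\in\{0,1\}$.

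The step I expect to be the genuine obstacle is the second one: making precise the passage from "$\I_A$ is a functional of $\bn$" to a clean correlation formula in which the factor $\hn_t$ appears isolated and ready to be annihilated by \eqref{eq:cond}. Naively, a tail functional depends on $\bn$ over the whole time axis, not just near a single time $t$, so one must localize — approximate $\I_A$ by something depending on $\bn$ only up to a finite time, control the tail of that approximation uniformly in $n$ (this is where the reversed-submartingale/backward-martingale convergence alluded to after \eqref{cond:newform} is the right tool), and then handle the Itô-integral bookkeeping that converts $d\bn[0]$ into $\hn\,d\bn$ on that finite horizon. Once the localization and the representation $d\bn[0]_s = \hn_s\,d\bn_s$ are in hand, the application of \eqref{eq:cond} and the final zero-one conclusion are comparatively routine; I would also take care that the "almost all $t$" and "$r\in[0,1)$" quantifiers in \eqref{eq:cond} are enough, which they are since one only needs the estimate along a suitable sequence $t\to\infty$, $r\to 1$.
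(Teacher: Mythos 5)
Your overall strategy---test the tail $\sigma$--algebra against a dense family of functionals of $\beta$, rewrite them in terms of $\bn$ via $d\bn[0]_s=\hn_s\,d\bn_s$, let \eqref{eq:cond} annihilate the factor $\hn_t$, and conclude that a tail event is independent of itself---is viable and in the same spirit as the paper. But as written there is a genuine gap, and it sits exactly where you flag it: no mechanism is supplied that isolates $\hn_t$ inside a conditional expectation. The substitution $d\beta_s=\hn_s\,d\bn_s$ by itself merely rewrites the same random variable: if $G=c+\int_0^\infty u_s\,d\beta_s$ is a test functional, then $\int u_s\hn_s\,d\bn_s$ has an $\Fn[0]$--adapted, not $\Fn$--adapted, integrand, so nothing has yet been gained towards correlating $G$ with an $\Fn_\infty$--measurable indicator. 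The missing step is the projection identity of Lemma \ref{lem:2}, namely $\E{\int_0^\infty \xi_s\,d\bn[0]_s\cond[\big]\Fn_\infty}=\int_0^\infty\E{\xi_s\hn_s\cond[\big]\Fn_s}\,d\bn_s$, proved by duality against $U=c+\int u_s\,d\bn_s$; once it is available, $\E{\Isymb_A G}-\P{A}\E{G}$ is controlled by Cauchy--Schwarz and the It\^o isometry, so no time--localization of the tail functional and no ``products of the $\hn_t$'s'' ever arise---those worries in your sketch point away from the actual resolution, and reversed--submartingale convergence is not what closes this step. A second unaddressed point: \eqref{eq:cond} conditions on $\Fn[0]_{rt}\vee\Fn_t$ with $r<1$, while the factor you want to pull out of $\E{\,\cdot\cond\Fn_t}$ (the integrand $u_t$, or the density $Z_t$ in the paper) is only $\Fn[0]_t$--measurable; you must approximate it by $\Fn[0]_{rt}$--measurable variables and control the error as $r\uparrow1$, which the paper does through $\norm{Z_t-Z_{rt}}_{L^1}\to0$.

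For comparison, the paper never manipulates a fixed tail event directly: it reduces exactness to $\E{\e(f)\cond[\big]\Fn_\infty}\to1$ for exponential functionals (Proposition \ref{prop:1}), uses Lemma \ref{lem:2} and Bayes' rule to obtain the closed formula $\E{\e(f)\cond[\big]\Fn_\infty}=\exp{\Mn_\infty-\frac12\q{\Mn}_\infty}$ with $\Mn=\int\E[Q]{\hn_s\cond\Fn_s}f(s)\,d\bn_s$, whence the criterion of Proposition \ref{cor:3} (it suffices that $\E[Q]{\hn_s\cond\Fn_s}\to0$ for every $Q\sim\P$), and only then feeds in \eqref{eq:cond} through the $Z_{rt}$--approximation. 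Your covariance route could indeed be completed, and would bypass the exponential formula, but its essential content is precisely the two ingredients named above; without them the proposal is a plan rather than a proof.
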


\section{Proof of Proposition \ref{prop:2}.}
\label{sec:2}

For a deterministic function $f\in L^2([0,\infty))$
we will use the notation $\e(f)$ for
\begin{displaymath}
  \e(f)=\exp{\int_0^\infty f(s)d\beta_s-\frac12\int_0^\infty f^2(s)ds}.
\end{displaymath}
Since the linear hull of the set of $\set*{\e(f)}{ f\in
  L^2([0,\infty))}$
is dense in $L^2(\W)$ the following
statement is obvious.
\begin{proposition}\label{prop:1}
  $\bigcap_n \Fn_{\infty}$ is trivial if and only if 
  $\E{\e(f)\cond[\big]\Fn_\infty}\to1$ 
  for each $f\in L^2([0,\infty))$. 
\end{proposition}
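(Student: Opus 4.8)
The plan is to reduce both implications to the reverse (downward) martingale convergence theorem, the only genuine input being the density statement just quoted. Write $\F_\infty=\bigcap_n\Fn_\infty$ for the tail $\sigma$-algebra. Since each iterate $\bn[n+1]=T\bn$ is obtained from $\bn$ by a stochastic integral, it is $\sigma(\bn)$-measurable up to $\P$-null sets, so $(\Fn_\infty)_{n\ge0}$ is a decreasing sequence of $\sigma$-algebras and $\F_\infty\subseteq\Fn[0]_\infty$ modulo null sets. Hence, for every $X\in L^1(\W)$, the downward theorem gives $\E{X\cond[\big]\Fn_\infty}\to\E{X\cond[\big]\F_\infty}$, both $\P$-almost surely and in $L^1$. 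I will also use the elementary fact that $\e(f)$, being the Dol\'eans--Dade exponential of a deterministic square-integrable integrand, satisfies $\E{\e(f)}=1$ and belongs to $L^2(\W)$.

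For the ``only if'' implication: if $\F_\infty$ is trivial then $\E{\e(f)\cond[\big]\F_\infty}=\E{\e(f)}=1$, and the convergence above yields $\E{\e(f)\cond[\big]\Fn_\infty}\to1$ for every $f\in L^2([0,\infty))$. For the ``if'' implication: assume $\E{\e(f)\cond[\big]\Fn_\infty}\to1$ for all such $f$; identifying the limit by the downward theorem gives $\E{\e(f)\cond[\big]\F_\infty}=1$ $\P$-a.s. for every $f$. Now $\E{\cdot\cond[\big]\F_\infty}$ is a linear $L^2$-contraction on $L^2(\W)$ that agrees with the constant functional $X\mapsto\E{X}$ on the family $\set*{\e(f)}{f\in L^2([0,\infty))}$; since the linear span of this family is dense in $L^2(\W)$, the two operators coincide on all of $L^2(\W)$. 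Applying the resulting identity $\E{X\cond[\big]\F_\infty}=\E{X}$ to $X=\Isymb_A$ for an arbitrary $A\in\F_\infty$ gives $\Isymb_A=\P{A}$ $\P$-a.s., whence $\P{A}\in\cbr{0,1}$, i.e. $\F_\infty$ is trivial.

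The one step deserving attention --- and presumably the reason this is isolated as a separate, ``obvious'' proposition --- is the totality of the exponentials $\set*{\e(f)}{f\in L^2([0,\infty))}$ in $L^2(\W)$; this is a classical Wiener-space fact (differentiating $t\mapsto\e(tf)$ repeatedly at $t=0$ produces the iterated Wiener integrals of all tensor powers of $f$, and such integrals span $L^2(\W)$). Beyond this the argument is a routine application of downward martingale convergence together with the contraction property of conditional expectation, so I do not anticipate any real obstacle apart from the bookkeeping of the ``modulo $\P$-null sets'' qualifications.
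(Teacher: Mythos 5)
Your argument is correct and is exactly the fleshing-out of what the paper treats as ``obvious'': the paper's justification is precisely the density of the linear span of $\set*{\e(f)}{f\in L^2([0,\infty))}$ in $L^2(\W)$, combined implicitly with reverse (downward) martingale convergence to identify $\lim_n\E{\e(f)\cond[\big]\Fn_\infty}$ with $\E{\e(f)\cond[\big]\bigcap_n\Fn_\infty}$. So you take the same route, just spelled out in full detail.
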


To express $\E{\e (f)\cond[\big]\Fn_\infty}$ we use the next proposition.
\begin{lemma}\label{lem:2} Assume that $\xi$ is a measurable and
  $\Fn[0]$--adapted 
  process satisfying 
  $\E{\int_0^\infty \xi^2_sds}<\infty$. Then 
  \begin{displaymath}
    \E{\int_0^\infty \xi_s d\bn[0]_s\cond[\bigg]\Fn_\infty}=\int_0^\infty
    \E{\xi_s\hn_s\cond[\big]\Fn_s}d\bn_s 
  \end{displaymath}
\end{lemma}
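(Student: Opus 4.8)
The plan is to establish the identity by first reducing to the bounded case, then verifying it for simple integrands, and finally passing to the limit. The key structural fact is that $\bn = T\bn[0]$ is a Brownian motion (in its own filtration $(\Fn_t)$) by the form of $T$ in \eqref{eq:T def}, and that $d\bn[0]_s$ and $d\bn_s$ are linked through $\hn_s$ via $(T\beta) = \int h\, d\beta$, so that heuristically $d\bn_s = \hn_s\, d\bn[0]_s$, equivalently $d\bn[0]_s = \hn_s\, d\bn_s$ (using $\hn_s \in \{-1,1\}$, so $(\hn_s)^2 = 1$). Concretely, since each factor $h(s,\bn[k])$ is progressively measurable and $\bn[k+1] = T\bn[k] = \int h(\cdot,\bn[k])\,d\bn[k]$, an induction on the iterates shows $\bn[0]_s = \int_0^s \hn_u\, d\bn_u$; this is the algebraic backbone of the lemma.

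First I would assume without loss of generality that $\xi$ is bounded (say $|\xi_s| \le K$) and supported on a finite interval $[0,N]$: the general case then follows by approximating $\xi$ in $L^2(\Omega \times [0,\infty))$ by such truncations, using that both sides of the claimed identity are continuous in $\xi$ with respect to this norm — the left side because conditional expectation is an $L^2$-contraction and $\xi \mapsto \int_0^\infty \xi_s\, d\bn[0]_s$ is an isometry, the right side because $\xi \mapsto \int_0^\infty \E{\xi_s \hn_s \mid \Fn_s}\, d\bn_s$ is a composition of $L^2$-contractions ($\hn_s$ is bounded by $1$, conditional expectation contracts, and stochastic integration against the Brownian motion $\bn$ is an isometry on $(\Fn_t)$-progressive integrands, noting $\E{\xi_s \hn_s \mid \Fn_s}$ is $\Fn$-progressive).

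Next, for a simple $\Fn[0]$-adapted integrand $\xi_s = \sum_{i} \eta_i\, \mathbb{1}_{(t_i, t_{i+1}]}(s)$ with $\eta_i \in \F^{(0)}_{t_i}$ bounded, the left-hand side is $\sum_i \E{\eta_i (\bn[0]_{t_{i+1}} - \bn[0]_{t_i}) \mid \Fn_\infty}$. Here I would use the identity $\bn[0]_{t_{i+1}} - \bn[0]_{t_i} = \int_{t_i}^{t_{i+1}} \hn_u\, d\bn_u$ from the first paragraph, and then — the crux of the argument — I would condition on $\Fn_\infty$ term by term. Since $\bn$ is $(\Fn_t)$-Brownian, $\int_{t_i}^{t_{i+1}} \hn_u\, d\bn_u$ is $\Fn_\infty$-measurable, so the conditioning only acts on $\eta_i$; one writes $\E{\eta_i \int_{t_i}^{t_{i+1}} \hn_u d\bn_u \mid \Fn_\infty}$ and moves $\E{\cdot \mid \Fn_\infty}$ inside the stochastic integral. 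Making that last move rigorous — that $\E{\int \psi_u\, d\bn_u \mid \Fn_\infty}$ equals $\int \E{\psi_u \mid \Fn_u}\, d\bn_u$ when $\psi$ is $\F^{(0)}$-adapted and $\bn$ is $\Fn$-Brownian — is the main obstacle, and I would handle it by approximating $\psi_u = \eta_i \hn_u$ by $(\Fn_t)$-simple processes and using that for an $(\Fn_t)$-stopping-time partition the conditional expectation $\E{\psi_{s_j}(\bn_{s_{j+1}} - \bn_{s_j}) \mid \Fn_\infty}$ collapses to $\E{\psi_{s_j} \mid \Fn_{s_j}}(\bn_{s_{j+1}} - \bn_{s_j})$ by the tower property and the independence of Brownian increments from the past; summing and passing to the limit yields $\int \E{\psi_u \mid \Fn_u}\, d\bn_u$. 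Applied with $\psi_u = \eta_i \hn_u$ this gives $\int_{t_i}^{t_{i+1}} \E{\eta_i \hn_u \mid \Fn_u}\, d\bn_u = \int_{t_i}^{t_{i+1}} \E{\xi_u \hn_u \mid \Fn_u}\, d\bn_u$ on that subinterval (using $\eta_i = \xi_u$ there), and summing over $i$ and combining with the density/continuity argument of the previous paragraph completes the proof.
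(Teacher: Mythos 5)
Your argument is essentially correct, but it takes a genuinely different route from the paper. The paper proves the lemma by duality: it takes an arbitrary $U\in L^2(\Fn_\infty)$, writes $U=c+\int_0^\infty u_s\,d\bn_s$ by the martingale representation theorem in the filtration generated by the Brownian motion $\bn$, rewrites $\int u_s\,d\bn_s=\int u_s\hn_s\,d\bn[0]_s$, and checks via the It\^o isometry and the tower property under the time integral that both sides of the claimed identity have the same inner product with every such $U$; no simple-process approximation is needed. Your proof instead establishes the ``conditional expectation inside the stochastic integral'' identity by hand, starting from $d\bn[0]_s=\hn_s\,d\bn_s$ and approximating by step processes. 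This is longer, but it avoids invoking the representation theorem and makes the probabilistic mechanism (where the conditioning actually acts) explicit; both approaches are legitimate.

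One step should be stated more carefully, because as worded its justification does not quite cover what is used: the collapse of $\E{\psi_{s_j}(\bn_{s_{j+1}}-\bn_{s_j})\cond[\big]\Fn_\infty}$ to $\E{\psi_{s_j}\cond[\big]\Fn_{s_j}}(\bn_{s_{j+1}}-\bn_{s_j})$. The increment pulls out since it is $\Fn_\infty$--measurable, so the real content is $\E{\psi_{s_j}\cond[\big]\Fn_\infty}=\E{\psi_{s_j}\cond[\big]\Fn_{s_j}}$. But $\psi_{s_j}$ is only $\Fn[0]_{s_j}$--measurable, not $\Fn_{s_j}$--measurable, so ``independence of Brownian increments from the past'' read with respect to $(\Fn_t)$ is not sufficient. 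What you need, and what is true, is that $\bn$ is a Brownian motion in the \emph{larger} filtration $(\Fn[0]_t)$: it is a continuous $(\Fn[0]_t)$--local martingale with quadratic variation $t$ because $(\hn_s)^2=1$, so L\'evy's characterization applies. Hence $\sigma(\bn_u-\bn_{s_j}:u\geq s_j)$ is independent of $\Fn[0]_{s_j}\supseteq\sigma(\psi_{s_j})\vee\Fn_{s_j}$, and since $\Fn_\infty=\Fn_{s_j}\vee\sigma(\bn_u-\bn_{s_j}:u\geq s_j)$, the standard lemma on conditioning under an independent enlargement gives the collapse. The same $(\Fn[0]_t)$--Brownian property is also what makes your stochastic integrals of merely $\Fn[0]$--adapted integrands against $\bn$ well defined in the first place, including the identity $\bn[0]_s=\int_0^s\hn_u\,d\bn_u$; it is worth saying this once at the start. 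Finally, deterministic partitions suffice in your approximation step; there is no need for stopping-time partitions.
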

\begin{proof}
First observe that both sides  of the equation makes sense. 

Denote by $V$ the left hand side of the equation and by $V'$ the right
hand side.
Besides let $U\in L^2(\Fn_\infty)$ and write it, using
that $\Fn$ is generated by the Brownian motion $\bn$, as
$U=c+\int_0^\infty u_sd\bn_s$ with some $c\in\real$ and 
$\Fn$--predictable $u$. Then
\begin{align*}
  \E{UV}&=\E{\int_0^\infty \xi_s\hn_s u_s ds}\\ &=
  \E{\int_0^\infty\E{\xi_s\hn_s|\Fn_s}u_sds}=
  \E{UV'}.
\end{align*}
  This proves that $V=V'$ which is the claim.
\end{proof}

In the proof 
of the next statement 
we call a probability measure
$Q\sim \P$ simple when it is in the form 
$dQ=\e(f)d\P$ with some $f\in L^2([0,\infty))$. 
\begin{proposition}\label{cor:3}%
$\bigcap_n \Fn_\infty$ is trivial if and only if for all $Q\sim \P$
\begin{equation}\label{eq:EQ to 0}
\E[Q]{\hn_s\cond[\big]\Fn_s}\to0, \quad\text{$\P$--almost surely, for almost all $s>0$}.
\end{equation}
\end{proposition}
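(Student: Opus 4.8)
The plan is to reduce the claim to Proposition~\ref{prop:1} via Lemma~\ref{lem:2}. Fix $f\in L^2([0,\infty))$, write $\e(f)_t=\exp(\int_0^t f\,d\beta-\tfrac12\int_0^t f^2)$, and observe that $\e(f)=1+\int_0^\infty\e(f)_s\,f(s)\,d\bn[0]_s$ has an $\Fn[0]$--adapted integrand, so Lemma~\ref{lem:2} gives
\[
  \E{\e(f)\cond[\big]\Fn_\infty}=1+\int_0^\infty\E{\e(f)_s\,f(s)\,\hn_s\cond[\big]\Fn_s}\,d\bn_s .
\]
By the tower property over $\Fn[0]_s$ (recall $\Fn_s\subseteq\Fn[0]_s$, and $\hn_s$ is $\Fn[0]_s$--measurable) and Bayes' rule the integrand equals $f(s)\,\E{\e(f)\cond[\big]\Fn_s}\,A^f_n(s)$, where $dQ_f=\e(f)\,d\P$ is the corresponding simple measure and $A^f_n(s):=\E[Q_f]{\hn_s\cond[\big]\Fn_s}$; hence $D^t:=\E{\e(f)\cond[\big]\Fn_t}$ satisfies $dD^t=D^t\,f(t)\,A^f_n(t)\,d\bn_t$, and letting $t\to\infty$,
\[
  \log\E{\e(f)\cond[\big]\Fn_\infty}=\int_0^\infty f(s)\,A^f_n(s)\,d\bn_s-\tfrac12\int_0^\infty f(s)^2\,A^f_n(s)^2\,ds .
\]
Two structural facts will be used throughout: $\Fn_t$ is decreasing in $n$ (since $\bn[n+1]_t$ is $\Fn_t$--measurable for every $t$), and — because $\hn[n+1]_s=h(s,\bn)\,\hn_s$ with $h(s,\bn)$ being $\Fn_s$--measurable — for each fixed $s$ and each $Y\in L^1$ the process $n\mapsto\abs{\E{Y\hn_s\cond[\big]\Fn_s}}$ is a reversed submartingale along $(\Fn_s)_n$; being dominated by the reversed martingale $\E{\abs{Y}\cond[\big]\Fn_s}$, it converges $\P$--a.s.\ and in $L^1$.

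Suppose first that \eqref{eq:EQ to 0} holds. Then for every $f$ we have $A^f_n(s)\to0$ a.s.\ for a.e.\ $s$, with $\abs{A^f_n(s)}\le1$ and $\E\int_0^\infty f(s)^2\,\E{\e(f)\cond[\big]\Fn_s}^2\,ds\le\E{\e(f)^2}\,\norm{f}_2^2<\infty$. Since each $\bn$ is a $\P$--Brownian motion, It\^o's isometry and dominated convergence force the stochastic integral in the first display to $0$ in $L^2(\P)$; thus $\E{\e(f)\cond[\big]\Fn_\infty}\to1$, and Proposition~\ref{prop:1} gives that $\bigcap_n\Fn_\infty$ is trivial.

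Conversely, suppose $\bigcap_n\Fn_\infty$ is trivial, so $\E{\e(f)\cond[\big]\Fn_\infty}\to1$, a.s.\ and in $L^1$, for every $f$. Taking $Q_f$--expectation of the second display and using Girsanov (under $Q_f$ the process $\bn_\cdot-\int_0^\cdot f(s)A^f_n(s)\,ds$ is a Brownian motion and the remaining stochastic integral has zero $Q_f$--mean) yields
\[
  \E[Q_f]{\log\E{\e(f)\cond[\big]\Fn_\infty}}=\tfrac12\int_0^\infty f(s)^2\,\E[Q_f]{A^f_n(s)^2}\,ds .
\]
The left-hand side is the relative entropy of $Q_f$ with respect to $\P$ on $\Fn_\infty$; it is nonnegative, nonincreasing in $n$ (monotonicity of relative entropy, since $(\Fn_\infty)_n$ is decreasing), and tends to $0$: indeed $\E{\e(f)\cond[\big]\Fn_\infty}\to1$ a.s.\ forces $\log\E{\e(f)\cond[\big]\Fn_\infty}\to0$ a.s., and the sequence is uniformly integrable under $Q_f$ because its positive part is dominated by $\E{\e(f)\cond[\big]\Fn_\infty}$ and its negative part by the reciprocal, which is a reversed $Q_f$--martingale. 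Hence $\int_0^\infty f(s)^2\,\E[Q_f]{A^f_n(s)^2}\,ds\to0$ for every $f$; since $\E[Q_f]{A^f_n(s)^2}$ is nonincreasing in $n$ (the square of a nonnegative reversed submartingale), it follows that $\E[Q_f]{A^f_n(s)^2}\to0$, and therefore $A^f_n(s)\to0$ a.s., at a.e.\ $s$ with $f(s)\ne0$.

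It remains to remove the restriction $f(s)\ne0$ and to reach an arbitrary $Q\sim\P$; this is the main obstacle, handled via the reversed submartingale structure. Fix a countable, $L^2([0,\infty))$--dense family $(g_j)$ of functions that are nonzero a.e.\ (such a family exists). Applying the previous paragraph to each $g_j$ and intersecting over $j$: for a.e.\ $s$ and every $j$ we have $A^{g_j}_n(s)\to0$ a.s., equivalently $\E{\e(g_j)_s\hn_s\cond[\big]\Fn_s}\to0$ a.s. By $L^2$--continuity of $g\mapsto\e(g)_s$ on $L^2([0,s])$ together with the reversed submartingale property (an $L^1$ limit of such a sequence is also its a.s.\ limit), this extends first to $\E{\e(g)_s\hn_s\cond[\big]\Fn_s}\to0$ a.s.\ for every $g\in L^2([0,s])$, and then, by linearity and density of the linear hull of $\set{\e(g)_s}{g\in L^2([0,s])}$ in $L^1(\Fn[0]_s)$, to: for a.e.\ $s$, $\E{Y\hn_s\cond[\big]\Fn_s}\to0$ a.s.\ for every $Y\in L^1(\Fn[0]_s)$. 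Finally, for an arbitrary $Q\sim\P$ with $dQ=D\,d\P$, put $D_s=\E{D\cond[\big]\Fn[0]_s}$; then $\E{D\hn_s\cond[\big]\Fn_s}=\E{D_s\hn_s\cond[\big]\Fn_s}\to0$ a.s., whereas $\E{D\cond[\big]\Fn_s}\to\E{D\cond[\big]\bigcap_n\Fn_s}>0$ a.s.\ by reversed-martingale convergence, whence $\E[Q]{\hn_s\cond[\big]\Fn_s}=\E{D\hn_s\cond[\big]\Fn_s}/\E{D\cond[\big]\Fn_s}\to0$ a.s., for a.e.\ $s$. The conceptual core is the relative-entropy identity; the delicate points are its vanishing (the uniform-integrability argument under $Q_f$) and this final Fubini-and-density extension to all $Y$ and all $Q\sim\P$.
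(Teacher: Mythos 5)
Your argument is correct, but in the crucial ``only if'' direction it takes a genuinely different route from the paper. Both proofs start from the same formula, obtained from Lemma~\ref{lem:2} and Bayes' rule, namely $\ln\E{\e(f)\cond[\big]\Fn_\infty}=\Mn_\infty-\frac12\q{\Mn}_\infty$ with $\Mn=\int f(s)\xin_s\,d\bn_s$, $\xin_s=\E[Q_f]{\hn_s\cond\Fn_s}$. The paper then uses triviality twice: once to deduce that $\abs{\xin_s}$ converges to a deterministic function $g(s)$, and once to force $\Mn_\infty-\frac12\q{\Mn}_\infty\to0$ a.s.; it identifies the limit behaviour of $\Mn_\infty$ by splitting off $\int f g\,\sgn(\xin_s)\,d\bn_s$, whose law is exactly $N(0,\sigma^2)$ with $\sigma^2=\int(fg)^2$, so the a.s.\ convergence can only hold if $\sigma^2=0$. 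You instead take the $Q_f$-expectation of the same log-density after the Girsanov change of drift under $Q_f$ (which is legitimate: the $\Fn$-density process of $Q_f$ is $\Zn_t$ with $d\Zn_t=\Zn_t f(t)\xin_t\,d\bn_t$), arriving at the relative-entropy identity $\E[Q_f]{\ln\Zn_\infty}=\frac12\int f^2\,\E[Q_f]{(\xin_s)^2}\,ds$, and you make the left side vanish by uniform integrability; monotonicity of $\E[Q_f]{(\xin_s)^2}$ in $n$ then localizes the conclusion in $s$. Your entropy route is arguably more conceptual (exactness becomes the statement that the information distinguishing $Q_f$ from $\P$ on $\Fn_\infty$ dies out) and avoids any distributional-limit reasoning; the paper's Gaussian argument, in exchange, involves only integrands bounded by $\abs f$, so no uniform-integrability discussion is needed. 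The remaining steps coincide in substance with the paper: the passage from a countable family of a.e.\ nonvanishing $f$'s to all $Y\in L^1$ and then to arbitrary $Q\sim\P$, via a closed subspace of $L^1$, Bayes' rule, and the reversed-submartingale upgrade from $L^1$ to a.s.\ convergence, is exactly the paper's device, only carried out with the time-$s$ exponentials $\e(f)_s$ (dense in $L^1(\Fn[0]_s)$) instead of the full-path exponentials.

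Two steps are stated more briskly than they deserve, though both are routine to repair. In the ``if'' direction your ``It\^o isometry and dominated convergence'' uses the $n$-dependent dominating function $f^2(\Zn_s)^2$; either dominate by $f^2(\sup_n\Zn_s)^2$ (Doob's inequality for the reversed martingale $\Zn_s$), or argue as the paper does with the exponentiated formula, where $\q{\Mn}_\infty\le\int f^2$ gives a deterministic bound and $\E{\Mn_\infty^2}=\E{\int f^2(\xin_s)^2ds}\to0$ by genuine dominated convergence. Similarly, uniform integrability of $(\ln\Zn_\infty)^+$ under $Q_f$ does not follow merely from the $n$-dependent domination $(\ln\Zn_\infty)^+\le\Zn_\infty$; one should add, e.g., that $\E[Q_f]{\Zn_\infty;\,\Zn_\infty>K}=\E{(\Zn_\infty)^2;\,\Zn_\infty>K}$ with $(\Zn_\infty)^2\le\E{\e(f)^2\cond[\big]\Fn_\infty}$, a uniformly integrable family, while $\P{\Zn_\infty>K}\le1/K$ uniformly in $n$ (your treatment of the negative part via the reversed $Q_f$-martingale $1/\Zn_\infty$ is fine as it stands).
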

\begin{proof}In the proof we mostly work with simple equivalent
  measures, and obtain the conclusion of the  ``only if'' part by
  approximation. 

  First we get a formula for 
  $\E{\e(f)\cond[\big]\Fn_\infty}$ when $f\in L^2([0,\infty))$ and then we apply
  Proposition \ref{prop:1}.
  
  So for the simple equivalent measure
  $dQ=\e(f)d\P$, let the density process be denoted by
  $Z_t=\E{\e(f)\cond\F_t}$. Then $dZ_t=Z_tf(t)d\beta_t$ and by Lemma \ref{lem:2} 
  \begin{multline*}
    \Zn_\infty=\E{\e(f)\cond[\big]\Fn_\infty}=
    \E{1+\int_0^\infty Z_tf(t)d\beta_t\cond[\bigg]\Fn_\infty}\\=
    1+\int_0^\infty f(t)\E{Z_t\hn_t\cond[\big]\Fn_t}d\bn_t.
  \end{multline*}
  By the Bayes rule $\E{Z_t\hn_t\cond\Fn_t}=\E[Q]{\hn_t\cond\Fn_t}\Zn_t$.
  That is, with $\xin_t=\E[Q]{\hn_t\cond\Fn_t}$ and $\Mn=\int \xin_s
  f(s)d\bn_s$ we can write  
  \begin{displaymath}
    \E{\e(f)\cond\Fn_\infty}=\exp{\Mn_\infty-\frac12\q{\Mn}_\infty}.
  \end{displaymath}
  When \eqref{eq:EQ to 0} holds then $\q{\Mn}_\infty\to0$ in
  $L^1(\P)$, $\Mn_\infty\to0$ in $L^2(\P)$, hence $\ln \Zn_\infty\to
  0$ in $L^1(\P)$. Since $\Zn_\infty=\E{\e(f)\cond\Fn_\infty}$ converges
  almost surely we get  that its limit is 1. This is true for all
  $f\in L^2[0,\infty)$ and by Proposition \ref{prop:1} we
  obtain that the tail $\sigma$--field $\bigcap_n\Fn_\infty$ is trivial.

  For the converse we prove below that when $\bigcap_n\Fn_\infty$ is
  trivial then  for each  $f\in L^2 [0,\infty)$ 
  \begin{equation}\label{eq:E hn to 0}
   f(s)\E{\e(f)\hn_s\cond[\big]\Fn_s}\to0,\quad\text{almost surely, for almost
     all $s>0$}. 
  \end{equation}
  Then we  consider
  \begin{displaymath}
    \cH_s=\set*{\xi\in L^1(\P)}{\text{$\E{\xi\hn_s\cond[\big]\Fn_s}\to0$ in
        $L^1(\P)$}}, \quad s>0.
  \end{displaymath}
  $\cH_s$ is obviously a closed subspace of $L^1(\P)$.  It is possible
  to choose $D=\smallset{f_1,f_2,\dots}\subset L^2 ([0,\infty))$, a
  countable set of 
  deterministic, nowhere vanishing 
  functions, such that 
  the linear
  hull of $\set*{\e(f)}{f\in D 
  }$ is dense in $L^1(\P)$. 
  Finally let
  \begin{displaymath}
    \cT=\set*{s>0}{\forall f\in D,\,
      \E{\e(f)\hn_s\cond[\big]\Fn_s}\to0}.
  \end{displaymath}
  Then $\cT$ has full Lebesgue measure within $[0,\infty)$ and for
  $s\in\cT$ we obviously have $\cH_s=L^1(\P)$. For $s\in\cT$
  \eqref{eq:EQ to 0} follows, by considering $\xi=dQ/d\P$.  
 
  It remains to show that
  \begin{equation}\label{eq:F triv}
    \bigcap \Fn_\infty\quad\text{is trivial}
  \end{equation}
  implies \eqref{eq:E hn to 0}. So we fix $f$ and use the 
  notation $Q$, $\xin$, $\Mn$ introduced at the beginning of the proof. 
  Note that 
  $(\abs{\xin_s},\Fn_s)_{n\geq0}$ is a reversed  $Q$-submartingale
  for each fixed $s$. Hence $\abs{\xin_s}$ is
  convergent almost surely (both under $\P$ and $Q$ by their
  equivalence) and the limit is $\bigcap_{n}\Fn_s\subset
  \bigcap_{n}\Fn_\infty$ measurable. Since $\bigcap_{n}\Fn_\infty$ is
  trivial there is a deterministic function $g$ such that
  $\abs{\xin_s}\to g(s)$ almost surely for almost all $s$. Obviously 
  $0\leq g(s)\leq 1$.

  Another implication of \eqref{eq:F triv} is that 
  \begin{equation}\label{eq:ln Z to 0}
    \ln\E{\e(f)\cond[\big]\Fn_\infty}=\Mn_\infty-\frac12\q{\Mn}_\infty \to
    0,\quad\text{almost surely}. 
  \end{equation}
  Here 
  \begin{displaymath}
    \q{\Mn}_\infty\to\sigma^2=\int_0^\infty \zjel{f(s)g(s)}^2ds,
    \quad\text{almost surely}
  \end{displaymath}
  and we will see that $\Mn_\infty$ has normal limit with expectation
  zero and variance $\sigma^2$. Then \eqref{eq:ln Z to 0} can only
  hold if $\sigma^2=0$ which obviously implies \eqref{eq:E hn to 0}.

  To finish the proof we write $\Mn_\infty$ as
  \begin{displaymath}
    \Mn_\infty=\int_0^\infty f(s)g(s)\sign(\xin_s)d\bn_s+
    \int_0^\infty f(s)\zjel{\xin_s-g(s)\sign(\xin_s)}d\bn_s.
  \end{displaymath}
  Here the law of the first term is normal $N(0,\sigma^2)$ not depending on $n$,
  while the second term goes to zero in $L^2(\P)$.   
\end{proof}

To finish the proof of Proposition \ref{prop:2} assume that 
\eqref{eq:cond} holds, that is 
  \begin{equation*}
    \lim_{n\to\infty}\E{\hn_t\cond[\big]\Fn[0]_{rt}\vee\Fn_t}=0,
    \quad\text{for almost
      all $t>0$ and $r\in[0,1)$}.
  \end{equation*}
Fix a $Q\sim\P$ and denote by $Z_t=\frac{dQ|_{\Fn[0]_t}}{d\P|_{\Fn[0]_t}}$ the 
density process. By the Bayes formula it is enough to show that 
\begin{displaymath}
  \E{Z_t\hn_t\cond[\big]\Fn_t}\to 0.
\end{displaymath}

Since $\abs{\hn}\leq1$ we have the next estimate
\begin{displaymath}
  \norm*{\E{Z_t\hn_t\cond[\big]\Fn[0]_{rt}\vee\Fn_t}-
    \E{Z_{rt}\hn_t\cond[\big]\Fn[0]_{rt}\vee\Fn_t}}_{L^1}\leq 
  \norm*{Z_{t}-Z_{rt}}_{L^1}, 
\end{displaymath}
and by \eqref{eq:cond}
\begin{displaymath}
  \E{Z_{rt}\hn_t\cond[\big]\Fn[0]_{rt}\vee\Fn_t}=
  Z_{rt}\E{\hn_t\cond[\big]\Fn[0]_{rt}\vee\Fn_t}\to0
\end{displaymath}
almost surely and in $L^1$. That is,
\begin{displaymath}
  \limsup_{n\to\infty}\norm*{\E{Z_t\hn_t\cond[\big]\Fn_t}}_{L^1}\leq 
  \inf_{r\in[0,1)}\norm*{Z_{t}-Z_{rt}}_{L^1}=0.
\end{displaymath}
This means that the 
limit of the reversed submartingale
$\abs[\big]{\E[Q]{\hn_t\cond[\big]\Fn_t}}$ is zero and $T$ is exact by
Proposition  
\ref{cor:3}. This completes the proof of Proposition \ref{prop:2}.

\acknowledgement{The author thanks 
Michel Emery for reading the first version of
this note and  offering helpful comments,  and the referee
for suggesting a simplification in the proof of Proposition \ref{cor:3}.
}

\def\doi#1{doi:~\href{http://dx.doi.org/#1}{\nolinkurl{#1}}}

\begin{thebibliography}{1}
\providecommand{\natexlab}[1]{#1}
\providecommand{\url}[1]{{#1}}
\providecommand{\urlprefix}{URL }
\expandafter\ifx\csname urlstyle\endcsname\relax
  \providecommand{\doi}[1]{DOI~\discretionary{}{}{}#1}\else
  \providecommand{\doi}{DOI~\discretionary{}{}{}\begingroup
  \urlstyle{rm}\Url}\fi
\providecommand{\eprint}[2][]{\url{#2}}

\bibitem[{Prokaj(2013)}]{Prokaj2010a}
Prokaj V (2013) Some sufficient conditions for the ergodicity of the
  {L}\'evy--transformation. In: Donati-Martin C, Lejay A, Rouault A (eds)
  S\'eminaire de {P}robabilit\'es, XLV, Springer International Publishing, pp
  93--121, \doi{10.1007/978-3-319-00321-4_2}, \eprint{1206.2485}

\end{thebibliography}

\end{document}